\author{Asaf Cohen Antonir\thanks{School of Mathematical Sciences, Tel Aviv University, Tel Aviv, 6997801, Israel.
Email: asafc1$@$tauex.tau.ac.il} \and Asaf Shapira \thanks{School of Mathematics, Tel Aviv University, Tel Aviv 69978, Israel. Email: asafico@tau.ac.il. Supported in part
by ERC Consolidator Grant 863438 and NSF-BSF Grant 20196.}}
\date{\today}
\theoremstyle{plain}
\newtheorem{theorem}{Theorem}[section]
\newtheorem{lemma}[theorem]{Lemma}
\newtheorem{claim}[theorem]{Claim}
\newtheorem{definition}[theorem]{Definition}
\def\moverlay{\mathpalette\mov@rlay}
\def\mov@rlay#1#2{\leavevmode\vtop{%
   \baselineskip\z@skip \lineskiplimit-\maxdimen
   \ialign{\hfil$\m@th#1##$\hfil\cr#2\crcr}}}
\newcommand{\charfusion}[3][\mathord]{
    #1{\ifx#1\mathop\vphantom{#2}\fi
        \mathpalette\mov@rlay{#2\cr#3}
      }
    \ifx#1\mathop\expandafter\displaylimits\fi}
\renewenvironment{proof}[1][\proofname]
{\par\pushQED{\qed}
	\normalfont\topsep6\p@\@plus6\p@\relax\trivlist
	\item[\hskip\labelsep\bfseries#1\@addpunct{.}]
	\ignorespaces}
{\popQED\endtrivlist\@endpefalse}
\DeclareMathOperator{\Opt}{Opt}
\definecolor{RED}{rgb}{1,0,0}\definecolor{BLUE}{rgb}{0,0,1} %DIF PREAMBLE
\title{Bounding the number of odd paths in planar graphs via convex optimization}
\date{}
\begin{document}

\maketitle

    \begin{abstract}
    Let $N_{\mathcal{P}}(n,H)$ denote the maximum number of copies of $H$ in an $n$ vertex planar graph.
    The problem of bounding this function for various graphs $H$ has been extensively studied since the 70's.
    A special case that received a lot of attention recently is when $H$ is the path on $2m+1$ vertices, denoted $P_{2m+1}$.
    Our main result in this paper is that
    %$$
    %N_{\mathcal{P}}(n,P_{2m+1})= O\left(m\left(\frac{n}{m}\right)^{m+1}\right)\;.
    %$$
    $$
    N_{\mathcal{P}}(n,P_{2m+1})=O(m^{-m}n^{m+1})\;.
    $$
    This improves upon the previously best known bound by a factor $e^{m}$,
    which is best possible up to the hidden constant, and makes a significant step towards resolving conjectures
    of Gosh et al. and of Cox and Martin. The proof uses graph theoretic arguments together with (simple) arguments
    from the theory of convex optimization.
    \end{abstract}

    \section{Introduction}

    In this paper we study the following extremal problem: given a fixed graph $H$, what is the maximum number of copies of $H$
    that can be found in an $n$ vertex planar graph? We denote this maximum by $N_{\mathcal{P}}(n,H)$.
    The investigation of this problem was initiated by Hakimi and Schmeichel \cite{HakSch1979} in the 70's.
    They considered the case when $H$ is a cycle of length $m$, denoted $C_m$.
    They determined $N_{\mathcal{P}}(n,C_3)$ and $N_{\mathcal{P}}(n,C_4)$ exactly, and for general $m\geq 3$ proved that $N_{\mathcal{P}}(n,C_{m})=\Theta(n^{\lfloor{m}/{2}\rfloor})$.
    Following this result, Alon and Caro \cite{AloCar1984} determined $N_{\mathcal{P}}(n,K_{2,m})$ exactly for all $m$, where $K_{2,m}$ is the complete $2$-by-$m$ bipartite graph. In a series of works \cite{Epp1993,GyoPauSalTomZam2021,HuyWoo2022,Wor1986}, which culminated with a recent paper of Huynh, Joret and Wood \cite{HuyJorWoo2020}, the asymptotic value of $N_{\mathcal{P}}(n,H)$ was determined up to a constant factor\footnote{This line of research was also generalized to other families of sparse host graphs, e.g.\ graphs that are embeddable in a surface of genus $g$, $d$-degenerate graphs, and more. In fact, the main result of \cite{HuyJorWoo2020} also determines (up to constant factors) the maximum number of copies of a given graph in an $n$ vertex graph which is embeddable in a surface of genus $g$. A recent far reaching generalization of \cite{HuyJorWoo2020} can be found in Liu \cite{Liu2021} where the order of magnitude of the maximum number of copies of a given graph in a `nowhere dense' graph was computed up to constant factors.} (depending on $H$) for every fixed $H$.

    The next natural question following the result of \cite{HuyJorWoo2020} is to determine the asymptotic growth of $N_{\mathcal{P}}(n,H)$ up to\footnote{We use the standard notation $o(1)$ to denote a quantity tending to $0$ when $n$ tends to infinity and $H$ is fixed. Similarly, when we write $o(n^k)$ we mean $o(1)\cdot n^k$. } $1+o(1)$, or more ambitiously, to determine its exact value. This line of research was initiated by Gy\H{o}ri, Paulos, Salia, Tompkins and Zamora \cite{GyoPauSalTomZam2019,GyoPauSalTomZam2021}, who showed that for large enough $n$ we have $N_{\mathcal{P}}(n,P_4)=7n^{2}-32n+27$ and  $N_{\mathcal{P}}(n,C_5)=2n^2-10n+12$, where $P_m$ denotes the path with $m$ vertices (and $m-1$ edges).
    We note that the result of Alon and Caro \cite{AloCar1984} implies that $N_{\mathcal{P}}(n,K_{1,2})=N_{\mathcal{P}}(n,P_3)= n^2+3n-16$.
    Addressing the problem of finding the asymptotic value of $N_{\mathcal{P}}(n,P_{m})$ up to $1+o(1)$, Ghosh, Gy\H{o}ri, Martin, Paulos, Salia, Xiao and Zamora \cite{GhoGyoMarPauSalXiaZam2021}, showed that $N_{\mathcal{P}}(n,P_{5})=(1+o(1))n^3$. They also raised the following conjecture\footnote{They also conjectured that the second order term is $O(n^{m})$.}
    regarding the asymptotic value of $N_{\mathcal{P}}(n,P_{2m+1})$ for arbitrary $m \geq 2$:
    \begin{equation}\label{eq-Conjecture}
       N_{\mathcal{P}}(n,P_{2m+1})=(4m^{-m}+o(1))n^{m+1}\;.
    \end{equation}
    We note that the lower bound in \eqref{eq-Conjecture} is easy. Indeed, start with a cycle of length $2m$, and then replace every second vertex with an independent set consisting of $(n-m)/m$ vertices, each with the same neighborhood as the original vertex it replaced.

    In a very recent paper, Cox and Martin \cite{CoxMar2021_1} introduced an analytic approach for proving \eqref{eq-Conjecture}.
    They showed that
    \begin{equation}\label{eqcm}
    N_{\mathcal{P}}(n,P_{2m+1})\leq (\rho(m)/2+o(1))n^{m+1}\;,
    \end{equation}
    where $\rho(m)$ is the solution to a certain convex optimization problem, which we define precisely in Section \ref{section3}.
    They further conjectured that
    \begin{equation}\label{CoxMartinConj}
        \rho(m)\leq 8m^{-m}\;,
    \end{equation}
    which, if true, implies \eqref{eq-Conjecture}. In the same paper, they verified their conjecture for $m=3$ by showing that $\rho(3)=8/27$, which confirms \eqref{eq-Conjecture} for $m=3$.
    Using the same approach they also improved the known asymptotic value of $N_{\mathcal{P}}(n,P_{2m+1})$ by showing that
    \[
        N_{\mathcal{P}}(n,P_{2m+1})\leq \left(\frac{1}{2\cdot(m-1)!}+o(1)\right)n^{m+1}\;.
    \]
    Note that this bound is roughly $e^{m}$ larger than the one conjectured in (\ref{eq-Conjecture}).

    Our main result in this paper, Theorem \ref{thm:number of path in planar graphs} below, makes a significant step towards the resolution of the Cox--Martin and Gosh et al.\ conjectures, by establishing \eqref{CoxMartinConj} up to an absolute constant.

    \begin{theorem}\label{thm:number of path in planar graphs}
    There is an absolute constant $C$ so that for every fixed $m\geq 2$ and large enough $n$, we have
    \[
        N_{\mathcal{P}}(n,P_{2m+1})\leq Cm^{-m}n^{m+1}\;.
    \]
    \end{theorem}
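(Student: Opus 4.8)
The plan is to reduce the count to a sum over the odd-indexed vertices of the path and then estimate that sum by convexity. Since adding an edge never destroys a copy of $P_{2m+1}$, we may assume $G$ is an edge-maximal planar graph, i.e.\ a triangulation, so that $\sum_{v}d(v)=6n-12$. Write a copy of $P_{2m+1}$ as $w_0w_1\cdots w_{2m}$ and put $b_i:=w_{2i-1}$ for $1\le i\le m$; then $w_0$ and $w_{2m}$ are arbitrary neighbours of $b_1$ and $b_m$, and each $w_{2i}$ is an arbitrary common neighbour of $b_i$ and $b_{i+1}$. Hence, writing $c(u,v):=|N(u)\cap N(v)|$ for the codegree,
\[
2\,N_{\mathcal{P}}(n,P_{2m+1})\ \le\ \Sigma\ :=\ \sum_{\substack{(b_1,\dots,b_m)\\\text{distinct}}}d(b_1)\,d(b_m)\prod_{i=1}^{m-1}c(b_i,b_{i+1})\,,
\]
and this bound is tight up to a factor $1+o(1)$ for the conjectured extremal graph (a cycle $C_{2m}$ with every second vertex blown up into an independent set). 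Keeping $b_1,\dots,b_m$ pairwise distinct is essential: dropping distinctness turns $\Sigma$ into the number of walks of length $2m$ in $G$, which for the extremal graph exceeds the claimed bound $m^{-m}n^{m+1}$ by a factor exponential in $m$.

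The graph-theoretic input is that common neighbourhoods along a tuple of distinct vertices are nearly disjoint. If $b_1,\dots,b_m$ are distinct then the sets $N_i:=N(b_i)\cap N(b_{i+1})$ satisfy $|N_i\cap N_j|\le 2$ for $i\ne j$, since three vertices with three common neighbours yield a $K_{3,3}$. Consequently
\[
\sum_{i=1}^{m-1}c(b_i,b_{i+1})=\sum_i|N_i|\ \le\ \Big|\bigcup_i N_i\Big|+2\tbinom{m-1}{2}\ \le\ n+m^2=(1+o(1))n\,.
\]
A refinement of the same argument, applied to the cyclic sequence $b_1,\dots,b_m,b_1$ (so that the ``closing'' codegree $c(b_1,b_m)$ is also accounted for) and using the inclusions $N_1\subseteq N(b_1)$, $N_{m-1}\subseteq N(b_m)$, gives analogous near-disjointness bounds tying together $d(b_1),d(b_m)$ and the codegrees $c(b_i,b_{i+1})$ and $c(b_1,b_m)$; one also has the trivial $c(u,v)\le\min(d(u),d(v))$ and, by planarity, $\sum_v c(u,v)=O(n+d(u))$.

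The crux, which I expect to be the main obstacle, is to derive $\Sigma=O(m^{-m}n^{m+1})$ from these facts. Bounding each summand by AM--GM, namely $d(b_1)d(b_m)\prod_i c(b_i,b_{i+1})=O(m^{-(m+1)}n^{m+1})$, and then multiplying by the $\approx n^m$ choices of tuple fails badly: the product is of this order only on a tiny, rigid family of tuples (for the extremal graph, essentially the $O(m)$ Hamilton paths on its $m$ dense hubs), so one must control the number of ``heavy'' tuples in tandem with the size of a typical summand. I would phrase this as a simple convex optimization problem: partition the vertices into boundedly many dyadic degree classes, express $\Sigma/n^{m+1}$ as a bounded sum over the ``types'' of the sequence $(b_1,\dots,b_m)$, bound the contribution of each type by a monomial in normalized degrees and codegrees subject to the constraints above --- $\sum_i c(b_i,b_{i+1})\le(1+o(1))n$, $c(u,v)\le\min(d(u),d(v))$, $\sum_v d(v)=O(n)$, and the near-disjointness relations --- and then, after the standard logarithmic substitution that makes the program convex, show that its optimum is $O(m^{-m})$, attained up to an absolute constant by the uniform cyclic configuration (all relevant degrees $\approx 2n/m$, all relevant codegrees $\approx n/m$). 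The two delicate points I anticipate are: (i) eliminating the spurious $n^{m-2}$ overcount from the light interior vertices $b_2,\dots,b_{m-1}$, for which one argues that a long product of large codegrees forces these vertices to lie in few dense regions, so the effective number of contributing tuples exceeds the extremal count only by a $\mathrm{poly}(m)$ factor; and (ii) absorbing the cyclic codegree $c(b_1,b_m)$, handled by the symmetrization above. Once $\Sigma=O(m^{-m}n^{m+1})$ is established, Theorem~\ref{thm:number of path in planar graphs} is immediate.
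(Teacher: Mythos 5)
Your setup is sound: the reduction $2\,N_{\mathcal{P}}(n,P_{2m+1})\le\Sigma$ is correct, and the observation that $|N_i\cap N_j|\le 2$ (else a $K_{3,3}$), giving $\sum_i c(b_i,b_{i+1})\le n+O(m^2)$ per tuple, is a genuine planarity input. But the argument stops exactly where you flag it yourself, at ``the crux, which I expect to be the main obstacle'': no derivation of $\Sigma=O(m^{-m}n^{m+1})$ is given. The convex program you gesture at --- dyadic degree classes, a sum over ``types,'' a logarithmic substitution --- is not written down, and the constraints you list do not obviously force its optimum to be $O(m^{-m})$. The sticking point is your item (i): the per-tuple bound $\prod_i c(b_i,b_{i+1})\le(n/(m-1))^{m-1}$ must be summed over tuples, and nothing in the listed inequalities (total degree $O(n)$, $c\le\min d$, $\sum_i c\le n$, near-disjointness) says that only $O(m)$ tuples can simultaneously have all codegrees of order $n/m$. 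Asserting that ``a long product of large codegrees forces these vertices to lie in few dense regions'' is stating the needed lemma, not proving it, and without it the factor-$e^m$ improvement over Cox--Martin's $1/(m-1)!$ bound is not obtained.

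The paper supplies exactly this missing structural ingredient, by a different route. Rather than bounding $\Sigma$ directly, it passes through Cox--Martin's analytic reduction $N_{\mathcal{P}}(n,P_{2m+1})\le(\rho(m)/2+o(1))n^{m+1}$, where $\rho(m)$ is an optimization over probability measures on $E(K_n)$, and then proves two lemmas. Lemma~\ref{prop:rho to beta} applies the KKT conditions to the polynomial $\rho(\mu;m)$ to show that at an optimizer every weighted degree satisfies $\bar\mu(i)\le 12/(m-1)$, reducing $\rho(m)$ to $\beta(P_m)$. Lemma~\ref{thm:main theorem} then bounds $\beta(P_m)$ via the auxiliary quantity $\beta^*_{w,n}(P_{(s,t)})$ and a weight-shifting argument (Claims~\ref{claim_double_path} and~\ref{claim_induction}) showing that the optimal measure may be taken to put all mass incident to the current path-endpoint on a single edge; peeling the path one edge at a time then gives $\beta^*_{1,n}(P_{(\ell,m-\ell)})\le m^{-m}$ by AM--GM. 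That weight-shifting step is what rules out a spread-out optimizer, i.e.\ it does the work of your item (i). A direct proof along your lines would need a comparable structural lemma about planar graphs; as written, the proposal is incomplete at its central step.
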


    As noted after \eqref{eq-Conjecture}, the above bound is best possible up to the value of $C$. Furthermore, as can be seen in the proof of Theorem \ref{thm:number of path in planar graphs}, the constant $C$ we obtain is $10^4$ (which can certainly be improved).

    \subsection{Related work and paper overview}

    In addition to studying $N_{\mathcal{P}}(n,P_{2m+1})$, Cox and Martin \cite{CoxMar2021_1} also introduced an analytic method for bounding the maximum number of even cycles in planar graphs.
    Similar to the case of odd paths discussed above, they showed that $N_{\mathcal{P}}(n,C_{2m})\leq (\beta(C_m)+o(1)) n^{m}$, where $\beta(C_m)$ is an optimization problem, similar to the one we study in Section \ref{section2}.
    They conjectured that $\beta(C_m)=m^{-m}$, a bound which implies by their framework that $N_{\mathcal{P}}(n,C_{2m})\leq (1+o(1)) (n/m)^{m}$.
    Observe that the example we mentioned after \eqref{eq-Conjecture} shows that this bound is best possible.
    Towards their conjecture, Cox and Martin \cite{CoxMar2021_1} proved that $\beta(C_m)\leq 1/m!$.
    Using the ideas in this paper, one can significantly improve this bound. In particular, using Lemma \ref{lem:number of P_m in an n vertex graph} in Section \ref{section2}, it is not hard to show that for some absolute constant $C$, we have
    \begin{equation}\label{eqcycles}
     \beta(C_m)\leq Cm^{-m}\;.
    \end{equation}
    In an independent work, Lv, Gy\H{o}ri, He, Salia, Tompkins and Zhu \cite{LvGyoHeSalTomZhu2022} confirmed the conjecture of Cox and Martin by showing that one can in fact obtain $C=1$ in (\ref{eqcycles}). We thus do not include the proof of (\ref{eqcycles}).

    We should point that the reason why studying $N_{\mathcal{P}}(n,P_{2m+1})$ appears to be much harder than $N_{\mathcal{P}}(n,C_{2m})$ is that
    as opposed to $\beta(C_m)$, which is an optimization problem involving a single graph, $\rho(m)$ is an optimization problem which involves several multigraphs. To overcome this difficulty
    we first study in Section \ref{section2} an optimization problem, denoted $\beta(P_m)$,
    which is the analogue of $\beta(C_m)$ for the setting of $P_m$. The main advantage of first studying $\beta(P_m)$ is that it allows us
    to employ a weight shifting argument, which does not seem to be applicable to $\rho(m)$.
    Our main result in that section is a nearly tight bound for $\beta(P_{m})$.
    However, as opposed to the case of $N_{\mathcal{P}}(n,C_{2m})$, a bound for $\beta(P_{m})$ does not immediately translate into a bound for $N_{\mathcal{P}}(n,P_{2m+1})$. Hence, in Section \ref{section3} we present the main novel part of this paper, showing how one can transfer any bound for $\beta(P_{m})$ into a bound for $\rho(m)$, thus proving Theorem \ref{thm:number of path in planar graphs}. To this end we use simple arguments from the theory of convex optimization, which allow us to exploit the fact that $\rho(m)$ is a low degree polynomial.

    The key lemmas leading to the proof of Theorem \ref{thm:number of path in planar graphs} are Lemmas \ref{thm:main theorem} and \ref{prop:rho to beta} for which we obtain bounds that are optimal up to constant factors. Moreover, if one can improve these bounds to the optimal conjectured ones, then this will give the conjectured inequality (\ref{eq-Conjecture}). We believe that with more care, it is possible to use the ideas in this paper to improve the bound for $\beta(P_m)$ in Lemma \ref{thm:main theorem} to the conjectured one.
    In contrast, because of the complex structure of $\rho(m)$, it seems that in order to improve the bound in Lemma \ref{prop:rho to beta} to the conjectured bound, a new idea is needed.

    \section{A variant of $\rho(m)$}
    \label{section2}
    Our goal in this section is to prove Lemma \ref{thm:main theorem} regarding the optimization problem $\beta(P_m)$.
    This lemma will be used in the next section in the proof of Theorem \ref{thm:number of path in planar graphs}.
    The proof of Lemma \ref{thm:main theorem} will employ a subtle weight shifting argument.
    We first recall several definitions from \cite{CoxMar2021_1}. In what follows we write $[n]$ to denote the set $\{1,\ldots,n\}$ and $K_n$ to denote the complete graph on $[n]$.

    \begin{definition}
    Let $n>0$ be an integer, and let $\mu$ be a probability measure on the edges of $K_{n}$.

    \begin{enumerate}
        \item For any $x\in [n]$ we define the weighted degree of $x$ to be
        \[
            \bar{\mu}(x)=\sum_{y\in [n]\setminus\{x\}}\mu(x,y)\;.
        \]
        \item For any subgraph $H\subseteq K_n$ we define the weight of $H$ to be
        \[
            \mu(H)=\prod_{e\in E(H)}  \mu(e)\;.
        \]
        \item For any graph $H$ with no isolated vertices, define
        \[
            \beta(\mu;H)=\sum_{H'\in \mathbf{C}(H,n)}\mu(H')\;,
        \]
        where $\mathbf{C}(H,n)$ is the set of all (non-induced and unlabeled) copies of $H$ in $K_n$.
        Further, we define
        \[
            \beta(H)=\sup_{\mu} \beta(\mu;H)\;,
        \]
        where the supremum is taken over all $n'$ and all probability measures $\mu$ on the edges of $K_{n'}$.
    \end{enumerate}
    \end{definition}

    Intuitively, the function $\beta(\mu;H)$ is the probability of hitting a (non-induced and unlabeled) copy of $H$ if $|E(H)|$ independent edges were chosen according to $\mu$.

    \begin{lemma}\label{thm:main theorem}
        For any integer $m\geq 2$ we have
        \begin{equation*}
            \beta(P_{m})\leq \frac{2e^2}{m^{m-2}}\;.
        \end{equation*}
    \end{lemma}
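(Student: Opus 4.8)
The plan is to work with the identity
$$2\beta(\mu;P_m)=\sum_{(v_1,\dots,v_m)}\prod_{i=1}^{m-1}\mu(v_iv_{i+1}),$$
the sum being over all injective $m$-tuples of vertices (each copy of $P_m$ is counted twice, once per orientation), and to split the analysis into two regimes according to the maximum weighted degree $D:=\max_{v}\bar\mu(v)$. As a preliminary sanity check one notes the soft probabilistic bound: if $f_1,\dots,f_{m-1}$ are drawn i.i.d.\ from $\mu$, they form the edge set of a copy of $P_m$ with probability exactly $(m-1)!\,\beta(\mu;P_m)$, whence $\beta(P_m)\le 1/(m-1)!$. This is a factor $e^{\Theta(m)}$ too large, so the real content is that producing a path is an exponentially unlikely outcome, and this must be squeezed out by hand.

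The easy regime is when $D$ is small. Summing the identity first over the last coordinate $v_m$ and using $\sum_{v_m\notin\{v_1,\dots,v_{m-1}\}}\mu(v_{m-1}v_m)\le\bar\mu(v_{m-1})\le D$ gives $\beta(\mu;P_m)\le D\,\beta(\mu;P_{m-1})$; iterating down to $\beta(\mu;P_2)=1$ yields
$$\beta(\mu;P_m)\le D^{\,m-2}.$$
Thus if $D\le(2e^2)^{1/(m-2)}/m$ (in particular whenever $D\le 1/m$, and for $m=2$ trivially) we are already done, with room to spare.

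The difficult regime — and the main obstacle — is when $D$ is of order $1/m$ or larger, i.e.\ there is a vertex $v$ with weighted degree $\gtrsim 1/m$. This is delicate for two reasons. First, the (near-)extremal configurations, such as uniform weight on a single $P_m$, already have internal vertices of weighted degree $\approx 2/m$, so the crude estimate $\beta(\mu;P_m)\le D^{m-2}\approx(2/m)^{m-2}$ overshoots by a factor $\approx 2^{m-2}$ and cannot simply be invoked. Second, a copy of $P_m$ can still pass through $v$, and naively cutting it at $v$ and bounding the two halves separately via $\beta$ is fatally lossy, since $\beta(P_a)\beta(P_b)$ with $a+b\approx m$ can exceed $\beta(P_m)$ by an exponential factor. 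The remedy, and the technical heart of the argument, is a sharpening of the peeling step together with a weight‑shifting argument. When $v_m$ is summed out, the exposed endpoint $v_{m-1}$ already has a forbidden neighbour $v_{m-2}$ on the path, so the gained factor is really $\bar\mu(v_{m-1})-\mu(v_{m-1}v_{m-2})$ rather than $\bar\mu(v_{m-1})$ — this recovers the missing factor $2$ for path‑like configurations. For the genuinely star‑like heavy vertices — many small edges incident to $v$ — one instead redistributes the weight around $v$ (merging parts of $N(v)$ and moving mass), arguing that each such move does not decrease $\beta(\cdot;P_m)$ while $D$ strictly decreases and no new bad vertex is created, and thereby driving the instance into the easy regime. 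The bookkeeping needed to certify that each shift keeps $\mu$ a probability measure and is monotone in $\beta$ is where the real work lies; at the stage where the instance has become essentially uniform on a finite bounded‑degree graph, one concludes by counting paths in that graph via Lemma~\ref{lem:number of P_m in an n vertex graph}. Combining the two regimes gives $\beta(P_m)\le 2e^2/m^{m-2}$.
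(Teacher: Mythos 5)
Your proposal has a genuine gap: the ``hard regime'' ($D\gtrsim 1/m$, which is precisely where the extremal configurations live) is not actually proved. You correctly observe that the crude peel $\beta(\mu;P_m)\le D^{m-2}$ only suffices for $D\lesssim 1/m$, and you correctly identify that the naive cut-at-$v$ bound $\beta(P_a)\beta(P_b)$ is exponentially lossy. But the proposed remedy is only described at the level of a research plan. The sharpened peeling factor $\bar\mu(v_{m-1})-\mu(v_{m-1}v_{m-2})$ is a heuristic, not a recursion you set up and close; the ``moves'' that redistribute weight around a star-like heavy vertex are never defined, their monotonicity in $\beta(\cdot;P_m)$ and strict decrease of $D$ are asserted without proof, and it is not shown that this process terminates in the claimed ``essentially uniform on a bounded-degree graph'' state. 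You also misapply Lemma~\ref{lem:number of P_m in an n vertex graph} at the end: that lemma bounds the weight $\beta^*_{1,n}(P_{(\ell,m-\ell)})$ of \emph{anchored} double paths (with both degree-one endpoints pinned to specific vertices) by $1/m^m$; it is not a path-counting statement for bounded-degree graphs and cannot be invoked as a finishing step the way you describe.

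For comparison, the paper avoids any regime split on $D$. It takes $\mu\in\Opt(n;P_m)$ and invokes the stationarity (Euler/KKT) identity of Cox--Martin (Lemma~\ref{lem:inequalities regarding the masses}): $\bar\mu(x)\,(m-1)\,\beta(\mu;P_m)=\sum_{P\ni x}\deg_P(x)\mu(P)$. Bounding $\deg_P(x)\le 2$, it then decomposes the paths through $x$ according to the incident edge $\{x,y\}$ and the position $\ell$ of $x$ along the path, which rewrites $\sum_{P\ni x}\mu(P)$ as $\sum_y\mu(x,y)\sum_{\ell}\beta^*(\mu;P_{(\ell,m-\ell-2)})$ with \emph{both} loose endpoints anchored at $x$ and $y$. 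This is exactly what Lemma~\ref{lem:number of P_m in an n vertex graph} controls (each term $\le 1/(m-2)^{m-2}$), and after summing over the $m-1$ positions, the factors $\bar\mu(x)(m-1)$ cancel on both sides, giving $\beta(\mu;P_m)\le 2/(m-2)^{m-2}\le 2e^2/m^{m-2}$ with no case analysis. The weight-shifting that you gesture at does appear, but only inside the proof of the anchored Lemma~\ref{lem:number of P_m in an n vertex graph} (Claims~\ref{claim_double_path} and~\ref{claim_induction}), where it is executed concretely for a fixed total mass and a fixed anchor, which is what makes the argument close. You may want to replace your regime split with this optimality-condition route: the factor $2$ you are chasing comes for free from $\deg_P(x)\le 2$, and the $m^{-m}$ comes from anchoring both ends so that AM-GM applies to the $m$ peeled weights.
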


    We remark that this lemma is optimal up to the constant factor $2e^2$. To see this, consider the uniform distribution over the edges of $C_{m}$, which shows that $\beta(P_{m}) \geq 1/m^{m-2}$. It seems reasonable to conjecture that $\beta(P_{m}) = 1/m^{m-2}$.

    The key step in the proof of Lemma \ref{thm:main theorem} is Lemma \ref{lem:number of P_m in an n vertex graph} below. To state this lemma, we first need the following definitions.

    \begin{definition}
    For every $k,\ell \geq 0$ we define $P_{(k,\ell)}$ to be a disjoint union of $P_{k+1}$ and $P_{\ell+1}$.
    \end{definition}

    From now on, we will not only deal with probability measures but also with bounded measures. Therefore, we will frequently write \emph{measure} to denote a bounded measure. Moreover, for a measure $\mu$ we will denote its total mass by $w(\mu)$.

    \begin{definition}\label{defbetastar}
        Suppose $\mu$ is a measure on the edges of $K_n$ and $s,t \geq 0$. Define
        \[
            \beta^*(\mu;P_{(s,t)}) = \sum _{P\in\mathbf{C}^*(P_{(s,t)},n)}\mu(P)\;,
        \]
        where $\mathbf{C}^*(P_{(s,t)},n)$ is the set of copies of $P_{(s,t)}$ in $K_{n}$ where the path of length $s$ starts with the vertex $n$, and the path of length $t$ starts with the vertex $1$. Further, for every $w>0$ we define
        \[
            \beta_{w,n}^*(P_{(s,t)})=\sup_{\mu} \beta^*(\mu;P_{(s,t)})\;,
        \]
        where the supremum is taken over all measures $\mu$ on the edges of $K_{n}$ with $w(\mu)=w$.
    \end{definition}

    We remark that for any measure $\mu$ on the edges of $K_n$, we have $\beta^*(\mu;P_{(0,0)})=1$. This is because $\mathbf{C}^*(P_{(0,0)},n)$ consist of a single graph, the independent set $I_2=\{1,n\}$, and because $\mu(I_2)=1$. This clearly implies that $\beta_{w,n}^*(P_{(0,0)})=1$ for every $w$ and $n$.

    \begin{lemma}\label{lem:number of P_m in an n vertex graph}
        For every $0\leq \ell \leq m \leq n$ we have
        \[
            \beta_{1,n}^*(P_{(\ell,m-\ell)})\leq \frac{1}{m^{m}}\;.
        \]
    \end{lemma}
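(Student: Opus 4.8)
The plan is to prove Lemma~\ref{lem:number of P_m in an n vertex graph} by induction on $m$, peeling off one edge at a time from one of the two paths. The base case $m=0$ is exactly the observation recorded above: $\beta^*_{1,n}(P_{(0,0)})=1=1/0^0$ (interpreting $0^0=1$), so the inequality holds with equality. For the inductive step, fix a measure $\mu$ on $E(K_n)$ with $w(\mu)=1$ and consider a copy of $P_{(\ell,m-\ell)}$ in $\mathbf{C}^*$. Without loss of generality assume $\ell\ge 1$ (if $\ell=0$ we peel from the other path, which starts at vertex~$1$; the argument is symmetric). The path of length $\ell$ starts at vertex~$n$, so its first edge is $\{n,y\}$ for some $y$; summing over the choice of $y$ and the remaining structure, one gets
\[
\beta^*(\mu;P_{(\ell,m-\ell)})\;\le\;\sum_{y}\mu(n,y)\cdot \beta^*_{w_y,\,n}\big(P_{(\ell-1,\,m-\ell)}\big),
\]
where $w_y$ is (at most) the total mass of $\mu$, but crucially the new ``root'' of the shorter path is now $y$ rather than $n$. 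To make the induction go through one needs the statement in a form that is insensitive to \emph{which} vertex the paths are rooted at and to the precise total mass; so I would first restate and prove the slightly more flexible claim that for any measure $\mu$ with $w(\mu)=w$, the sum over copies of $P_{(\ell,m-\ell)}$ with prescribed (arbitrary, possibly equal) start-vertices is at most $w^m/m^m$ — equivalently $\beta^*_{w,n}(P_{(\ell,m-\ell)})\le w^m/m^m$ by scaling $\mu\mapsto \mu/w$, so it suffices to treat $w=1$.

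The heart of the matter is choosing \emph{which} of the two paths to shorten at each step so that the weights stay balanced. The naive bound $\sum_y\mu(n,y)\beta^*_{1,n}(P_{(\ell-1,m-\ell)})\le \bar\mu(n)\cdot\frac{1}{(m-1)^{m-1}}$ loses too much: it would give roughly $1/(m-1)^{m-1}$ rather than $1/m^m$, since $\bar\mu(n)$ can be as large as $1$. The fix is to exploit that the roots of the two paths can be taken to be \emph{distinct} endpoints, and more importantly to split the measure: write $\mu=\mu_1+\mu_2$ according to, say, whether an edge is incident to the current root of the long path, and apply induction to each piece with its own smaller total mass, then recombine using the super-additivity/convexity of $w\mapsto w^m$ in the right direction. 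Concretely, if the long path has root $r$, let $a=\bar\mu(r)$ be the mass at $r$; the first edge contributes a factor accounted for by $a$, and what remains is a copy of $P_{(\ell-1,m-\ell)}$ in $K_n$ under the \emph{same} measure $\mu$ but now we may also have used up some mass. Tracking this carefully, the recursion one wants is of the shape $f(m)\le \max_{a\in[0,1]} a\cdot f(m-1)$ combined with a second-moment-type gain, and the bound $1/m^m$ emerges from the elementary inequality $\big(1-\tfrac1m\big)^{m-1}\ge \tfrac1e\cdot\big(\tfrac{m-1}{m}\big)^{-1}$-type manipulations — i.e.\ from $(m-1)^{m-1}\cdot\frac{1}{?}=m^m$ after optimizing the split point at $a=1/m$ or similar.

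I expect the main obstacle to be exactly this weight-balancing: setting up an induction hypothesis general enough (arbitrary roots, arbitrary total mass $w$, and possibly a two-variable version tracking the masses "seen" by each of the two paths separately) that the one-edge-peeling recursion closes with the clean constant $1/m^m$, rather than leaking a factor that compounds to $e^m$ over $m$ steps. A clean way to organize this may be to always peel from the \emph{longer} of the two remaining paths (so $\ell\le m-\ell$ is maintained, as in the lemma's hypothesis) and to bound $\sum_y \mu(r,y)\,w_y^{\,m-1}$ using H\"older / power-mean against $\sum_y\mu(r,y)=\bar\mu(r)\le 1$ and $w_y\le 1$; the extremal configuration — all mass spread uniformly, matching the $C_m$ lower bound — should saturate every inequality used, which is the sanity check that the constant is correct. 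The remaining details (the case $\ell=0$, the interpretation of $0^0$, and verifying the base of the induction) are routine once the recursion is in place.
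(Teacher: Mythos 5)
The approach you sketch---induction on $m$, peeling one edge at a time from one of the two paths, and optimizing the mass allocated to the peeled step---is in fact a viable alternative to the paper's argument, and your instinct about the extremal configuration and the critical split $a=1/m$ is correct. But as written the recursion never gets pinned down, and several of the intermediate ideas you float (writing $\mu=\mu_1+\mu_2$ and recombining by convexity of $w\mapsto w^m$, a ``second-moment-type gain,'' a H\"older/power-mean bound on $\sum_y\mu(r,y)w_y^{m-1}$) are dead ends: in the correct recursion there is no sum over $y$ to H\"olderize, because the quantity that replaces $w_y$ does not depend on $y$. The clean version is this: after peeling the edge $\{n,y\}$, the remaining copy of $P_{(\ell-1,m-\ell)}$ lives in $K_{n-1}$ (it cannot revisit $n$, since $P_{(s,t)}$ is a vertex-disjoint union), and the restricted measure has total mass $w-\bar{\mu}(n)$ \emph{regardless of $y$}. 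Hence, with $a:=\bar{\mu}(n)$,
\[
\beta^*_{w,n}\big(P_{(\ell,m-\ell)}\big)\;\le\; a\cdot \beta^*_{w-a,\,n-1}\big(P_{(\ell-1,m-\ell)}\big)\;,
\]
and with the inductive hypothesis $\beta^*_{w',k}(P_{(s,t)})\le (w')^{s+t}/(s+t)^{s+t}$ one is reduced to maximizing $a(w-a)^{m-1}/(m-1)^{m-1}$ over $a\in[0,w]$, which equals $w^m/m^m$ at $a=w/m$. You never actually write this inequality (your $\beta^*_{w_y,n}$ stays in $K_n$ and leaves $w_y$ undefined, and if you keep the full mass $w$ the recursion only yields $w^m/(m-1)^{m-1}$, which is too weak), and you leave the final optimization to a hand-wave, so as it stands this is a plan rather than a proof.

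The paper's argument is genuinely different in mechanics. Rather than bound a recursion directly over all measures, it first proves a structural fact (Claim~\ref{claim_double_path}) by a perturbation/weight-shifting argument: a $w$-optimal measure may be assumed to concentrate all mass incident to vertex $n$ on the single edge $\{n-1,n\}$. This makes the peeling deterministic (Claim~\ref{claim_induction}), yielding $\beta^*_{1,n}(P_{(\ell,m-\ell)})\le\prod_{i=1}^m w_i$ with $\sum_i w_i\le 1$, after which AM--GM finishes. Your route avoids the perturbation lemma entirely, trading it for the single-variable optimization $\max_a a(w-a)^{m-1}$, which is AM--GM in disguise. Both work and give the same constant; the gap in your proposal is that the precise recursive inequality is never stated or verified.
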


    \begin{claim}\label{claim_double_path}
        Suppose that $t$ is a non-negative integer, $s,n$ are positive integers, and $w\geq 0$. Then, there exists a measure $\mu$ on the edges of $K_n$ with $w(\mu)=w$, satisfying:
        \begin{enumerate}
            \item $\beta^*(\mu;P_{(s,t)})=\beta^*_{w,n}(P_{(s,t)})$, and
            \item for all $q\neq n-1$ we have $\mu(q,n)=0$.
        \end{enumerate}
    \end{claim}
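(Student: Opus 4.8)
The plan is to take a measure $\mu^*$ attaining the supremum that defines $\beta^*_{w,n}(P_{(s,t)})$ and then redistribute the mass it places on edges incident to the distinguished vertex $n$, pushing all of it onto the single edge $\{n-1,n\}$ in a way that does not decrease $\beta^*$; optimality of $\mu^*$ then forces $\beta^*$ to be unchanged, which gives both conclusions of the claim. To begin I would record that a maximizer exists: for $w>0$ the nonnegative measures on $E(K_n)$ of total mass $w$ form a compact (scaled) simplex in $\mathbb{R}^{E(K_n)}$, and $\mu\mapsto\beta^*(\mu;P_{(s,t)})$ is a polynomial in the edge weights, hence continuous, so its maximum is attained. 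The degenerate cases are easy and I would clear them first: if $w=0$ take $\mu\equiv 0$; if $n\le 2$ then, since $s\ge 1$, one cannot embed $P_{s+1}\sqcup P_{t+1}$ in $K_n$, so $\mathbf{C}^*(P_{(s,t)},n)=\emptyset$, $\beta^*_{w,n}(P_{(s,t)})=0$, and $\mu=w\cdot\delta_{\{n-1,n\}}$ works (condition (2) being vacuous). So from now on assume $w>0$ and $n\ge 3$.

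The crux is a decomposition of $\beta^*$ according to which edge at $n$ is used. Because $s\ge1$, in every copy $P\in\mathbf{C}^*(P_{(s,t)},n)$ the vertex $n$ is the endpoint of a path carrying at least one edge, so $P$ contains exactly one edge at $n$, namely $\{n,q\}$ where $q$ is the second vertex of the $s$-edge path; moreover $q\neq 1$, since vertex $1$ lies in the other component of $P$ (it is the start of the $t$-edge path, or the isolated vertex when $t=0$) and the two components are disjoint. Hence, setting for each $q\in\{2,\dots,n-1\}$
\[
A_q(\mu)=\sum_{\substack{P\in\mathbf{C}^*(P_{(s,t)},n)\\ \{n,q\}\in E(P)}}\ \ \prod_{\substack{e\in E(P)\\ e\ne\{n,q\}}}\mu(e),
\]
I obtain $\beta^*(\mu;P_{(s,t)})=\sum_{q=2}^{n-1}\mu(n,q)\,A_q(\mu)$, and the key point is that each $A_q(\mu)$ depends only on the restriction of $\mu$ to $E(K_{n-1})$, because the copies it counts meet the vertex $n$ only at the very start of the $s$-edge path.

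Given this decomposition the rearrangement is short. Let $m_n=\sum_{q\in[n-1]}\mu^*(n,q)$ be the total mass that $\mu^*$ places on edges incident to $n$, pick $q^*\in\{2,\dots,n-1\}$ with $A_{q^*}(\mu^*)=\max_{2\le q\le n-1}A_q(\mu^*)$, and let $\mu'$ agree with $\mu^*$ on $E(K_{n-1})$, with $\mu'(n,q^*)=m_n$ and $\mu'(n,q)=0$ for $q\neq q^*$. Then $w(\mu')=w$, every $A_q$ is unchanged since it only sees $E(K_{n-1})$, and
\[
\beta^*(\mu';P_{(s,t)})=m_n\,A_{q^*}(\mu^*)\ \ge\ \sum_{q=2}^{n-1}\mu^*(n,q)\,A_q(\mu^*)=\beta^*(\mu^*;P_{(s,t)})=\beta^*_{w,n}(P_{(s,t)}),
\]
so $\mu'$ is again optimal. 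Finally I would apply the transposition swapping $q^*$ and $n-1$: as $q^*,n-1\in\{2,\dots,n-1\}$, it fixes the distinguished vertices $1$ and $n$, hence it is a weight-preserving bijection of $\mathbf{C}^*(P_{(s,t)},n)$ onto itself, preserving both $w(\cdot)$ and $\beta^*(\,\cdot\,;P_{(s,t)})$; pushing $\mu'$ forward by it produces a measure $\mu$ that is still optimal and carries all of its mass at $n$ on the edge $\{n-1,n\}$, as required.

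I do not expect a genuine obstacle; the only step needing care is the decomposition, specifically the three facts that every copy in $\mathbf{C}^*(P_{(s,t)},n)$ meets $n$ in exactly one edge, that this edge is never $\{1,n\}$, and that $A_q$ is independent of the weights of edges incident to $n$ — each immediate from the definition of $\mathbf{C}^*(P_{(s,t)},n)$ and the hypothesis $s\ge1$. The displayed inequality is just the trivial averaging bound $\sum_{q=2}^{n-1}\mu^*(n,q)\,A_q(\mu^*)\le A_{q^*}(\mu^*)\sum_{q=2}^{n-1}\mu^*(n,q)\le m_n\,A_{q^*}(\mu^*)$.
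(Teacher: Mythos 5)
Your proof is correct, and while it rests on the same underlying insight as the paper's proof---that once $\mu|_{K_{n-1}}$ is fixed, $\beta^*(\mu;P_{(s,t)})$ is a linear function $\sum_{q=2}^{n-1}\mu(n,q)A_q$ of the edge-weights at $n$, with coefficients $A_q$ depending only on $\mu|_{K_{n-1}}$ (these are exactly the paper's $W_q$)---it executes this idea more directly. The paper introduces an auxiliary notion of a \emph{$w$-useful} measure (a $w$-optimal measure maximizing $\max_k\mu(n,k)$ over all $w$-optimal measures, whose existence requires an extra compactness argument over the compact set of optimizers), takes such a measure, assumes WLOG that $\mu(n-1,n)$ is the maximal weight at $n$, and then derives a contradiction by shifting mass between just two edges $\{n-1,n\}$ and $\{q,n\}$: if $W_{n-1}\ge W_q$ the shift contradicts usefulness, and otherwise it contradicts optimality. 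Your proof skips the usefulness machinery entirely: it takes any maximizer, concentrates all of the mass at $n$ onto the single edge $\{n,q^*\}$ with $q^*=\arg\max A_q$, checks by a one-line averaging bound that this cannot decrease $\beta^*$, and then relabels via the transposition $(q^*\;n\!-\!1)$, which fixes the distinguished vertices $1$ and $n$ and hence preserves $\mathbf{C}^*$. Both proofs also need the observation that each copy in $\mathbf{C}^*(P_{(s,t)},n)$ meets $n$ in exactly one edge (using $s\ge1$) and that this edge is never $\{1,n\}$; you state this explicitly, the paper leaves it implicit. The net effect is the same conclusion, but your route is shorter: one direct rearrangement plus a relabeling, in place of a two-stage optimization (optimal, then useful) and a case-split contradiction. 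This is a legitimate simplification rather than just a cosmetic rewrite.
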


    \begin{proof}
        The main idea in the proof is the introduction of the notion of a $w$-useful measure.
        We say that a measure $\mu$ on the edges of $K_n$ with $w(\mu)=w$ is \emph{$w$-optimal} if
        \[
            \beta^*(\mu;P_{(s,t)})=\beta_{w,n}^*(P_{(s,t)})\;.
        \]
        We further say that $\mu$ is \emph{$w$-useful} if $\mu$ is $w$-optimal and
        \[
            \max_{k\in[n-1]} \mu(n,k) = \sup_{\eta,k} \eta(n,k)\;,
        \]
        where the supremum is taken over all $k\in [n-1]$ and all measures $\eta$ which are $w$-optimal. Let us see why such a $w$-useful measure exists.
        Note that there is a natural bijection between measures $\mu$ with $w(\mu)=w$, and vectors in the simplex $\Delta=\{x\in \mathbb{R}^{\binom{n}{2}}:x_i \geq 0 ~\mbox{and}~\sum_{i=1}^{\binom{n}{2}} x_i=w\}$. Thus, to show that a $w$-useful measure exists we think of $\mu$ as a vector in $\Delta$.
        Recalling that
        \[
            \beta^*(\mu;P_{(s,t)}) = \sum _{P\in\mathbf{C}^*(P_{(s,t)},n)}\mu(P)= \sum_{P\in\mathbf{C}^*(P_{(s,t)},n)}\prod_{e\in E(P)}  \mu(e)\;,
        \]
        we see that $\beta^*(\mu;P_{(s,t)})$ is an $\binom{n}{2}$-variate polynomial, with variables $\mu(e)$ for all $e\in E(K_{n})$. Under these notations, $w$-optimal measures are maximal points of the polynomial $\beta^*(\mu;P_{(s,t)})$ in $\Delta$. Since $\Delta$ is compact and $\beta^*(\mu;P_{(s,t)})$ is continuous, we deduce that $O_w$, the set of all $w$-optimal measures, is non-empty. Moreover, $O_w$ is a compact set, since it is closed (as the preimage of a closed set under the continuous function $\beta^*(\mu;P_{(s,t)})$) and bounded (as it is contained in $\Delta$).
        Setting $f(\mu)=\max_{k\in[n-1]}\mu(n,k)$, we find that $\mu$ is a $w$-useful measure if and only if it is a maximal point of $f$ within $O_w$. Since $O_w$ is compact and $f$ is continuous, a $w$-useful measure exists.

        We now prove that the existence of $w$-useful measures implies the claim. Indeed, let $\mu$ be a $w$-useful measure. Assume with out loss of generality\footnote{If this is not the case, we can permute the vertices and end up with such measure.} that $\mu(n-1,n)\geq 0$ is maximal among all $\mu(k,n)$. We claim that $\mu$ is as required. The first condition follows immediately from the fact that any $w$-useful measure is also $w$-optimal.
        Assume towards contradiction that the second condition fails, that is, that there exists a $q\neq n-1$ with $\mu(q,n)>0$. We will now show that there is a measure $\mu'$ satisfying $w(\mu')=w$ which will either
        contradict the fact that $\mu$ is $w$-optimal or the fact that it is $w$-useful.

        We define $\mu'$ as follows:
        We first set $\mu'(e)=\mu(e)$ for every edge other than the two edges $\{n-1,n\}$ and $\{q,n\}$.
        Define $W_q$ to be the weight (under $\mu$) of all copies of $P_{(s-1,t)}$, not containing $n$, such that the path of length $s-1$ starts with $q$, and the path of length $t$ starts with $1$. Define $W_{n-1}$ analogously. Then, we define
        \[
            \mu'(n,n-1)=\begin{cases}
                        \mu(n-1,n)+\mu(q,n) & \text{if } W_{n-1}\geq W_q\;,\\
                        0 & \text{else}\;,
                    \end{cases}
        \]
        and
        \[
            ~~~~~~\mu'(q,n)=\begin{cases}
                        0 & \text{if } W_{n-1}\geq W_q\;,\\
                        \mu(q,n)+\mu(n-1,n) & \text{else\;.}
                    \end{cases}
        \]
        To see that we indeed get a contradiction, assume first that $W_{n-1}\geq W_q$.
Since a copy of $P_{(s,t)}$ in $C^{*}(P_{(s,t)},n)$ uses at most one of the edges $\{n-1,n\}$ and $\{q,n\}$, decreasing the value of
$\{q,n\}$ by some $\varepsilon$ while increasing that of $\{n-1,n\}$ by the same $\varepsilon$ increases the
total weight of copies of $P_{(s,t)}$ by $\varepsilon(W_{n-1}-W_q)$. We thus infer that
\begin{align*}
            \beta^*(\mu';P_{(s,t)}) = \beta^*(\mu;P_{(s,t)})+\mu(q,n)(W_{n-1}-W_q) \geq \beta^*(\mu;P_{(s,t)})\;.
\end{align*}
Since $\mu'(n,n-1)>\mu(n,n-1)$ we see that $\mu'$ witnesses the fact that $\mu$ is not $w$-useful.
        If on the other hand $W_{q}>W_{n-1}$, then
        \begin{align*}
            \beta^*(\mu';P_{(s,t)}) = \beta^*(\mu;P_{(s,t)})+\mu(n-1,n)(W_{q}-W_{n-1})> \beta^*(\mu;P_{(s,t)})=\beta_{w,n}^*(P_{(s,t)})\;,
        \end{align*}
        so $\mu'$ witnesses the fact that $\mu$ is not $w$-optimal.
    \end{proof}

    \begin{claim}\label{claim_induction}
    Suppose $s,t$ are non-negative integers, $n$ is a positive integer, and $w\geq 0$. Then, there are $w_1,\ldots ,w_s\geq 0$ such that $\sum_{i=1}^{s}w_i\leq w$ and such that
    \[
        \beta_{w,n}^*(P_{s,t})\leq \beta_{w',n-s}^*(P_{0,t}) \cdot \prod_{i=1}^{s}w_i \;,
    \]
    where $w'=w-\sum_{i=1}^{s}w_i$.
    \end{claim}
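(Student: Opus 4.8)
The plan is to induct on $s$, using Claim \ref{claim_double_path} to strip off the vertex $n$ one layer at a time. First I would record the two base/degenerate cases. If $s=0$ there is nothing to prove: the product over $i\in\{1,\dots,s\}$ is empty (equal to $1$), $w'=w$, and $n-s=n$, so the claimed inequality is an equality. And if $\mathbf{C}^*(P_{(s,t)},n)=\emptyset$ — which in particular happens whenever $n<s+t+2$ — then $\beta^*_{w,n}(P_{(s,t)})=0$, and taking $w_1=\dots=w_s=0$ works, since for $s\geq 1$ the product vanishes. So I would assume henceforth that $s\geq 1$ and $n\geq s+t+2$; in particular $n-s\geq t+2\geq 2$, so every complete graph appearing in the recursion is well defined.

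For the inductive step I would invoke Claim \ref{claim_double_path} with parameters $(s,t,n,w)$ to get a measure $\mu$ on $E(K_n)$ with $w(\mu)=w$, $\beta^*(\mu;P_{(s,t)})=\beta^*_{w,n}(P_{(s,t)})$, and $\mu(q,n)=0$ for all $q\neq n-1$. Put $w_1:=\mu(n-1,n)\in[0,w]$. The key point is a bijection: any copy $P\in\mathbf{C}^*(P_{(s,t)},n)$ with $\mu(P)>0$ must use $\{n-1,n\}$ as the first edge of its length-$s$ path (that path starts at $n$, and $\{n-1,n\}$ is the only edge at $n$ of positive $\mu$-weight), and deleting this edge turns $P$ into a member of $\mathbf{C}^*(P_{(s-1,t)},n-1)$ on the vertex set $\{1,\dots,n-1\}$ — with the length-$(s-1)$ path now starting at $n-1$ and the length-$t$ path still starting at $1$ — and this correspondence is a bijection onto all of $\mathbf{C}^*(P_{(s-1,t)},n-1)$ (copies of $\mu$-weight $0$ contributing nothing). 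Letting $\mu'$ denote the restriction of $\mu$ to $E(K_{n-1})$, which has total mass $w-w_1$ since all the mass $\mu$ places on edges incident to $n$ sits on $\{n-1,n\}$, summing over copies and using $\mu(P)=\mu(n-1,n)\,\mu'(P\setminus\{n-1,n\})$ gives
\[
\beta^*_{w,n}(P_{(s,t)}) \;=\; \beta^*(\mu;P_{(s,t)}) \;=\; w_1\cdot\beta^*(\mu';P_{(s-1,t)}) \;\leq\; w_1\cdot\beta^*_{w-w_1,\,n-1}(P_{(s-1,t)}),
\]
the last step being the definition of the supremum $\beta^*_{w-w_1,\,n-1}$.

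Then I would apply the induction hypothesis to $(s-1,t,n-1,w-w_1)$ to obtain $w_2,\dots,w_s\geq 0$ with $\sum_{i=2}^s w_i\leq w-w_1$ and $\beta^*_{w-w_1,\,n-1}(P_{(s-1,t)})\leq \beta^*_{w',\,n-s}(P_{(0,t)})\cdot\prod_{i=2}^s w_i$, using that the leftover mass equals $(w-w_1)-\sum_{i=2}^s w_i=w-\sum_{i=1}^s w_i=w'$ and that $(n-1)-(s-1)=n-s$. Chaining this with the previous display yields $\beta^*_{w,n}(P_{(s,t)})\leq \beta^*_{w',\,n-s}(P_{(0,t)})\cdot\prod_{i=1}^s w_i$, while $\sum_{i=1}^s w_i=w_1+\sum_{i=2}^s w_i\leq w_1+(w-w_1)=w$, closing the induction.

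I do not expect a genuine obstacle: the argument is a clean one-vertex-at-a-time reduction powered entirely by Claim \ref{claim_double_path}. The one place needing care is the bijection step — checking that, after deleting the forced edge $\{n-1,n\}$, the distinguished vertices $1$ and $n-1$ really do play the roles demanded by the $\mathbf{C}^*(\cdot,n-1)$ convention, and that the weight factorization is exact with the sum ranging over the full index set. I would also want to verify that the degenerate regimes of $s,t,n$ are all absorbed by the two opening cases, so that Claim \ref{claim_double_path} and the recursive call are only ever applied with admissible parameters.
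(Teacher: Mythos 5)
Your proof is correct and essentially the same as the paper's: both use Claim \ref{claim_double_path} to force all $\mu$-mass at vertex $n$ onto the single edge $\{n-1,n\}$, peel that edge off to reduce $P_{(s,t)}$ on $K_n$ to $P_{(s-1,t)}$ on $K_{n-1}$, and iterate. The paper phrases this iteration as an explicit construction of a chain of measures $\mu_0,\dots,\mu_s$ satisfying a list of invariants, whereas you phrase it as a clean induction on $s$; the underlying argument and the key factorization step are identical.
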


    \begin{proof}
    First, if $s+t+1\geq n$ then the claim is trivial, as $\mathbf{C}^*(P_{(s,t)},n)=\emptyset$. So we assume for the rest of the proof that $s+t+2\leq n$.
    Let $\mu_0$ be a measure on the edges of $K_n$ as guaranteed by Claim \ref{claim_double_path}. Since $\beta_{w,n}^*(P_{(s,t)})=\beta^*(\mu_0;P_{(s,t)})$, it is enough to prove that there are $w_1,\ldots,w_s\geq 0$ such that $\sum_{i=1}^{s}w_i\leq w$ and
    \begin{equation}\label{eqbeta1}
      \beta^*(\mu_0;P_{(s,t)})\leq \beta_{w',n-s}^*(P_{0,t})\cdot \prod_{i=1}^{s}w_i\;,
    \end{equation}
    where $w'=w-\sum_{i=1}^{s}w_i$. We define inductively a sequence of reals $w_1,\ldots w_k\geq 0$ with $\sum_{i=1}^{k}w_i\leq w$, along with measures $\mu_1,\ldots ,\mu_k$ on the edges of $K_{n-1},\ldots ,K_{n-k}$, respectively, such that the following holds for all $1 \leq j \leq s$, where we set $w'_{j}=w-\sum_{i=1}^{j}w_i$:
    \begin{enumerate}[label=(\roman*)]
        \item\label{1} $w(\mu_j)=w'_{j}$,
        \item\label{2} $w_{j}=\mu_{j-1}(n-j+1,n-j)$,
        \item\label{3} for all $t\in[n-j-2]$ we have $\mu_j(n-j,t)=0$,
        \item\label{4} $\beta^*(\mu_j;P_{(s-j,t)})=\beta_{w'_{j},n-j}^*(P_{(s-j,t)})$, and
        \item\label{5} $\beta^*(\mu_{j-1};P_{(s-j+1,t)})\leq w_j \cdot \beta^*(\mu_{j};P_{(s-j,t)})$.
    \end{enumerate}
    Indeed, assuming $w_1,\ldots, w_j$ and $\mu_1,\ldots,\mu_j$ have already been chosen, we now choose $w_{j+1}$ and $\mu_{j+1}$.
    We first set $w_{j+1}=\mu_j(n-j,n-j-1)\geq 0$ so that the second condition holds.
    Further, set $\mu'_{j+1}=\mu_{j}|_{n-j-1}$, the restriction of $\mu_{j}$ to the edges of $K_{n-j-1}$.
    Observe that by the induction hypothesis on $\mu_{j}$, we have $\mu_{j}(n-j,t)=0$ for all $t\neq n-j-1$.
    Hence
    \[
        w(\mu'_{j+1})=w(\mu_{j})-\sum_k\mu_{j}(n-j,k)=w(\mu_{j})-w_{j+1}=w'_{j+1}\;,
    \]
    and
    \begin{align}\label{eq1-lemma}
        \beta^*(\mu_j;P_{(s-j,t)})= w_{j+1}\cdot \beta^*(\mu'_{j+1};P_{(s-j-1,t)})\leq w_{j+1}\cdot \beta_{w'_{j+1},n-j-1}^*(P_{(s-j-1,t)})\;.
    \end{align}
    Let $\mu_{j+1}$ be the measure given by Claim \ref{claim_double_path} applied with $P_{(s-j-1,t)}$ and total mass $w'_{j+1}$.
    We claim that $\mu_{j+1}$ satisfies the inductive properties. The fact that it satisfies the first condition is immediate from its definition.
    To see that $\mu_{j+1}$ satisfies the last three conditions, note that by Claim \ref{claim_double_path} the measure $\mu_{j+1}$ satisfies
    \begin{equation}\label{eq2-lemma}
        \beta^*(\mu_{j+1};P_{(s-j-1,t)})=\beta_{w'_{j+1},n-j-1}^*(P_{(s-j-1,t)})\;,
    \end{equation}
    and $\mu_{j+1}(n-j-1,t)=0 $ for all $t\neq n-j-2$. Finally, combining \eqref{eq1-lemma} and \eqref{eq2-lemma} we obtain
    \[
        \beta^*(\mu_j;P_{(s-j,t)})\leq w_{j+1}\cdot\beta^*(\mu_{j+1};P_{(s-j-1,t)})\;,
    \]
    thus verifying the last three properties.
    Repeatedly applying property \ref{5} we deduce that
    $$
    \beta^*(\mu_0;P_{(s,t)}) \leq \beta^*(\mu_s;P_{(0,t)})\cdot \prod_{i=1}^{s}w_i\;.
    $$
Since $\beta^*(\mu_s;P_{(0,t)})= \beta^*_{w'_s,n-s}(P_{(0,t)})=\beta^*_{w',n-s}(P_{(0,t)})$ (by property \ref{4} and the definition of $w'$) we have thus proved (\ref{eqbeta1}) and the proof is complete.
    \end{proof}

    We now use Claim \ref{claim_induction} to prove Lemma \ref{lem:number of P_m in an n vertex graph}.

    \begin{proof}[Proof of Lemma \ref{lem:number of P_m in an n vertex graph}]
        Claim \ref{claim_induction} applied with $s=\ell,t=m-\ell$ and with $w=1$ asserts that there are $w_1,\ldots ,w_\ell\geq 0$ such that $\sum_{i=1}^{\ell}w_i\leq 1$ and such that
        \begin{equation}\label{eq1-main-lemma}
            \beta_{1,n}^*(P_{\ell,m-\ell})\leq \beta_{w',n-\ell}^*(P_{0,m-\ell}) \cdot \prod_{i=1}^{\ell}w_i \;,
        \end{equation}
        where $w'=1-\sum_{i=1}^{\ell}w_i$.
        Clearly, for all integers $s,t,k$ and $w\geq 0$ we have $\beta_{w,k}^*(P_{(s,t)})=\beta_{w,k}^*(P_{(t,s)})$.
        Hence, using Claim \ref{claim_induction} with $s=m-\ell,t=0$ and with $w=w'$, we obtain a sequence $w_{\ell+1},\ldots,w_{m}$ of non-negative reals, such that $\sum_{i=\ell+1}^m w_i\leq w'$ and such that
        \begin{equation}\label{eq2-main-lemma}
             \beta_{w',n-\ell}^*(P_{0,m-\ell})=\beta_{w',n-\ell}^*(P_{m-\ell,0})\leq \beta_{w'',n-m}^*(P_{(0,0)})\cdot \prod_{i=\ell+1}^{m}w_i =\prod_{i=\ell+1}^m w_i\;,
        \end{equation}
        where $w''=w'-\sum_{i=\ell+1}^m w_i$, and we used the fact that $\beta_{w'',n-m}^*(P_{(0,0)})=1$ (see the remark after Definition \ref{defbetastar}).
        Combining \eqref{eq1-main-lemma} and \eqref{eq2-main-lemma}, we infer that there are $w_1,\ldots,w_m\geq 0$ with $\sum_{i=1}^{m}w_i\leq 1$ such that
        \[
            \beta_{1,n}^*(P_{\ell,m-\ell})\leq \prod_{i=1}^{m}w_i\leq \left(\frac{\sum_{i=1}^{m}w_i}{m}\right)^m\leq \frac{1}{m^m}\;,
        \]
        where the second inequality is the AM-GM inequality, and the last inequality follows from the properties of the sequence $w_1,\ldots ,w_m$.
        \end{proof}

    To deduce Lemma \ref{thm:main theorem} from the above claims, we recall a definition and a lemma from Cox and Martin \cite{CoxMar2021_1} which we specialize here to the case of $P_m$.

    \begin{definition}
        For an integer $n$, we denote by $\Opt(n;H)$ the set of all probability measures $\mu$ on the edges of $K_{n}$ satisfying
        \[
            \beta(\mu; P_m) = \sup_{\eta} \beta(\eta;P_m)\;,
        \]
        where the supremum is taken over all probability measures $\eta$ on the edges of $K_{n}$.
    \end{definition}

    \begin{lemma}[Lemma 4.5 in \cite{CoxMar2021_1}]\label{lem:inequalities regarding the masses}
    For every $n\geq m\geq 2$ and $\mu \in \Opt(n; P_m)$, we have the following for all $x \in [n]$
    \[
        \bar{\mu}(x)\cdot (m-1)\cdot \beta(\mu;P_m)=\sum_{\substack{P\in \mathbf{C}(P_m,n)\\ V(P)\ni x}}\deg_{P}(x)\mu (P)\;.
    \]
    \end{lemma}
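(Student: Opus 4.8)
\textbf{The plan} is to read the asserted identity as a first-order optimality condition for the maximizer $\mu$, combined with a short double-counting argument. For an edge $e\in E(K_n)$ let $D_e$ denote the partial derivative of the polynomial $\beta(\mu;P_m)=\sum_{P\in\mathbf{C}(P_m,n)}\prod_{f\in E(P)}\mu(f)$ with respect to the variable $\mu(e)$. Since $P_m$ is a simple path, each edge occurs at most once in each copy $P$, so $D_e=\sum_{P\in\mathbf{C}(P_m,n):\,e\in E(P)}\prod_{f\in E(P)\setminus\{e\}}\mu(f)$, and in particular $\mu(e)\,D_e=\sum_{P\in\mathbf{C}(P_m,n):\,e\in E(P)}\mu(P)$ for \emph{every} edge $e$ (both sides vanish when $\mu(e)=0$).

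\textbf{Step 1: stationarity on the support.} I would first show that there is a constant $\lambda\ge 0$ with $D_e=\lambda$ for every edge $e$ with $\mu(e)>0$. This is the standard fact that a point maximizing a differentiable function over the probability simplex has a common partial derivative along its support: given edges $e,f$ with $\mu(e)>0$, the measure $\mu_t$ obtained from $\mu$ by moving mass $t$ from $e$ to $f$ is again a probability measure on $E(K_n)$ for all small enough $t>0$, and $\frac{d}{dt}\beta(\mu_t;P_m)\big|_{t=0}=D_f-D_e$, so optimality of $\mu$ forces $D_f\le D_e$; when also $\mu(f)>0$, the symmetric perturbation gives $D_e=D_f$. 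Note that no information about $D_e$ for $e$ outside the support is needed, which is what keeps this step clean, and that $D_e\ge 0$ as a sum of products of nonnegative numbers, so $\lambda\ge 0$.

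\textbf{Step 2: double counting, and identifying $\lambda$.} Fix $x\in[n]$ and sum the identity $\mu(e)\,D_e=\sum_{P:\,e\in E(P)}\mu(P)$ over all edges $e$ incident to $x$. On the left, each $e$ with $\mu(e)=0$ contributes $0$ and each $e$ with $\mu(e)>0$ contributes $\lambda\,\mu(e)$, so the left-hand side equals $\lambda\sum_{e\ni x}\mu(e)=\lambda\,\bar\mu(x)$. On the right, interchanging the order of summation, a copy $P$ is counted once for each of its edges incident to $x$, i.e.\ with multiplicity $\deg_P(x)$ if $x\in V(P)$ and $0$ otherwise; hence the right-hand side equals $\sum_{P\in\mathbf{C}(P_m,n),\,V(P)\ni x}\deg_P(x)\,\mu(P)$. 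To pin down $\lambda$, I would run the same computation summing over \emph{all} edges of $K_n$: the left-hand side becomes $\lambda\sum_e\mu(e)=\lambda$ since $\mu$ is a probability measure, while the right-hand side becomes $\sum_P|E(P)|\,\mu(P)=(m-1)\,\beta(\mu;P_m)$ because $P_m$ has $m-1$ edges. Combining the two displays gives $\bar\mu(x)\cdot(m-1)\cdot\beta(\mu;P_m)=\lambda\,\bar\mu(x)=\sum_{P:\,V(P)\ni x}\deg_P(x)\,\mu(P)$, which is the claim.

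\textbf{The point that needs care} is Step 1: one must check that $\mu_t$ is feasible for small $t>0$ precisely because $\mu(e)>0$, so that the conclusion $D_e=\lambda$ holds only on the support of $\mu$, and then verify—as in Step 2—that this suffices because edges of zero $\mu$-mass contribute $0$ to every sum that appears. Beyond this I do not expect any real obstacle: the optimization defining $\Opt(n;P_m)$ is finite-dimensional over the compact simplex, so a maximizer exists, the objective is a genuine polynomial, and the first-order argument applies verbatim (the degenerate case where $\supp(\mu)$ is a single edge is harmless, since then both sides reduce correctly).
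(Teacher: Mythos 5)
Your proof is correct and complete. Note that the paper does not supply its own proof of this lemma (it is quoted as Lemma 4.5 from Cox and Martin \cite{CoxMar2021_1}), but your argument is exactly the approach the paper deploys for the closely analogous Lemma~\ref{prop:rho to beta}: invoke the KKT/stationarity condition on the simplex to get a common partial derivative $\lambda$ on the support of $\mu$, observe that $\mu(e)\,D_e = \sum_{P\ni e}\mu(P)$ for every edge (so that zero-mass edges contribute nothing on both sides), sum over edges incident to a fixed $x$ to produce the $\deg_P(x)$ weighting, and sum over all edges and use $\sum_e \mu(e)=1$ to identify $\lambda = (m-1)\beta(\mu;P_m)$. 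Both the stationarity step (the perturbation $\mu_t$ is feasible for small $t>0$ exactly when $\mu(e)>0$, and $\frac{d}{dt}\beta(\mu_t;P_m)\big|_{t=0}=D_f-D_e$) and the double counting are handled carefully; the only cosmetic point worth making explicit is that the identity is tautologically true when $\beta(\mu;P_m)=0$ (e.g.\ if $m\geq 3$ and $\supp(\mu)$ is a single edge), so one may as well assume $\beta(\mu;P_m)>0$ before dividing or interpreting $\lambda$. In short, this is the intended argument, just spelled out by hand rather than citing a packaged KKT statement as the paper does via Theorem~\ref{thm:KKT}.
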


    \begin{proof}[Proof of Lemma \ref{thm:main theorem}]
        Suppose $n \geq m$ and take any $\mu\in \Opt(n;P_m)$. We will next show that $\beta(\mu;P_m)\leq \frac{20}{m^{m-2}}$ thus completing the proof.
        Let $x\in [n]$ be such that $\bar{\mu}(x)\neq 0$. By Lemma \ref{lem:inequalities regarding the masses} we have
        \begin{align}\label{eq3}
            \bar{\mu}(x)\cdot (m-1)\cdot \beta(\mu;P_m)\leq 2\sum_{\substack{P\in \mathbf{C}(P_m,n)\\ V(P)\ni x}}\mu (P)\;.
        \end{align}
        Given distinct $s,t\in[n]$ and $0 \leq \ell \leq m-2$ we define $\mathbf{C}^*(s,t,\ell)$ to be the set of all copies of $P_{(\ell,m-\ell-2)}$ in $K_n$, where the path of length $\ell$ starts with $s$ and the path of length $m-\ell-2$ starts with $t$.
        We have
        \begin{align}
               \sum_{\substack{P\in \mathbf{C}(P_m,n)\\ V(P)\ni x}}\mu (P)&= \sum_{y\in [n]\setminus \{x\}}\mu(x,y)\sum_{\ell=0}^{m-2}\sum_{\substack{P \in \mathbf{C}^*(x,y,\ell)}}\mu (P)\nonumber\\
               &\leq \sum_{y\in [n]\setminus \{x\}}\mu(x,y)\sum_{\ell=0}^{m-2}\beta^*_{1,n}(P_{(\ell,m-\ell-2)})\nonumber\\
               &\leq \frac{m-1}{(m-2)^{m-2}}\sum_{y\in [n]\setminus \{x\}}\mu(x,y)=\frac{\bar{\mu}(x)(m-1)}{(m-2)^{m-2}}\label{eq4}\;,
        \end{align}
        where the second inequality holds by the definition\footnote{We rely on the fact that although $\beta^*_{w,n}(P_{(s,t)})$ was defined with respect to paths starting at vertices $1$ and $n$, we could have chosen any pair of vertices in $[n]$ (in the above proof we use $x,y$). } of $\beta^*_{1,n}(P_{(s,t)})$, and the third inequality holds by Lemma \ref{lem:number of P_m in an n vertex graph}.
        Recalling that $\bar{\mu}(x)>0$ and combining \eqref{eq3} and \eqref{eq4} we infer that
        \[
            \beta(\mu;P_m)\leq \frac{2}{(m-2)^{m-2}}\leq \frac{2e^2}{m^{m-2}}\;.\qedhere
        \]
    \end{proof}

    \section{Proving the main result}
    \label{section3}

    We start this section with stating the optimization problem of Cox and Martin \cite{CoxMar2021_1}.

    \begin{definition}
        Let $n$ be an integer and let $\mu$ be a probability measure on the edges of $K_{n}$. Then, for any integer $m\geq 2$, letting $(n)_m$ to be the set of all ordered $m$-tuples of distinct elements from $[n]$, define
        \[
            \rho (\mu;m)=\sum_{x\in (n)_m}\bar{\mu}(x_1)\left(\prod_{i=1}^{m-1}\mu (x_i,x_{i+1})\right)\bar{\mu}(x_m)\;.
        \]
        Furthermore, define
        \[
            \rho_n(m)=\sup_{\mu}\rho(\mu;m)\quad \text{and}\quad \rho(m)=\sup_{n\in \mathbb{N}}\rho_n(m)\;,
        \]
        where the supremum in the definition of $\rho_n(m)$ is taken over all probability measures $\mu$ on the edges of $K_{n}$.
    \end{definition}

Note that if we expand the products in the definition of $\rho(m)$ we see that
$\rho(m)$ is very similar to $\beta(P_{m+2})$. The crucial difference is that in $\rho(m)$ we count the total weight of walks of a very special structure. These walks are formed by first choosing distinct $x_2,\ldots ,x_{m+1}$ to be a copy of $P_{m}$, and then choosing \emph{arbitrary} $x_1\neq x_2$ and $x_{m+2}\neq x_{m+1}$ (so we allow $x_1=x_{m+1}$ and/or $x_1,x_{m+2}\in \{x_2,\ldots ,x_{m+1}\}$). For example, a walk of this type might be $(1,2,1,2)$ or $(1,2,1,3,1)$.

Our main task in this section is to prove the following lemma.

    \begin{lemma}\label{prop:rho to beta}
    For all integers $n\geq m\geq 2$ we have
    \[
        \rho_n(m)\leq \frac{1152}{m^2}\cdot \beta(P_{m})\;.
    \]
    \end{lemma}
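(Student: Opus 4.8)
The plan is to bound $\rho_n(m)$ by a sum of terms, each of which looks like $\beta(\mu; P_j)$ for some $j$ with $2 \le j \le m$, times factors coming from the two "free" endpoint edges $\bar\mu(x_1)$ and $\bar\mu(x_m)$. The starting point is the structural observation already made in the text: a walk counted by $\rho(\mu;m)$ is obtained from a genuine copy of $P_m$ on vertices $x_2,\ldots,x_{m+1}$ (using my indexing consistent with the excerpt, it is really $x_1,\ldots,x_m$ forming a path, weighted by $\bar\mu(x_1)\bar\mu(x_m)\prod \mu(x_i,x_{i+1})$). So first I would write $\bar\mu(x_1) = \sum_{y \ne x_1}\mu(x_1,y)$ and $\bar\mu(x_m) = \sum_{z \ne x_m}\mu(x_m,z)$ and expand. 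The issue is that $y$ or $z$ may coincide with one of $x_1,\ldots,x_m$, so the resulting object is not literally a path of length $m+1$ — it is a path with up to two extra pendant/back edges that can close cycles or repeat vertices. I would partition the expansion according to \emph{where} $y$ and $z$ land: either $y \notin \{x_1,\ldots,x_m\}$ (a true extension) or $y = x_i$ for some $i$ (a "short" walk), and symmetrically for $z$.

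Next I would handle each piece. When both extra edges are genuine extensions, the term is exactly a weighted copy of $P_{m+2}$... but wait — the lemma we are allowed to use gives $\beta(P_m)$, and the conjectural relation between $\beta(P_j)$ for different $j$ is what we want to avoid re-deriving. The cleaner route, and the one I expect the authors take, is to NOT extend but rather to \emph{truncate}: bound $\bar\mu(x_1) \le 1$ is too lossy, so instead use Lemma~\ref{lem:number of P_m in an n vertex graph} directly. Concretely, fix the middle: for the sub-walk $x_2,\ldots,x_{m+1}$ (a copy of $P_m$, say with $s$ edges on one side of a chosen pivot and $t$ on the other, $s+t = m-1$), Lemma~\ref{lem:number of P_m in an n vertex graph} bounds $\beta^*_{1,n}(P_{(s,t)}) \le 1/(m-1)^{m-1}$ when we've pinned the two path-endpoints. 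The correct decomposition is: sum over the two endpoints $u = x_2$, $v = x_{m+1}$ of the inner path; the weight of all inner paths with those endpoints is $\le \beta^*_{1,n}(P_{(\ell, m-2-\ell)})$ summed over $\ell$, giving a factor $O(m) \cdot (m-2)^{-(m-2)}$; and then the two outer edges contribute $\bar\mu(u)\,\bar\mu(v)$ after summing $x_1$ over neighbors of $u$ and $x_{m+2}$ over neighbors of $v$. But $\sum_{u} \bar\mu(u)$ is not bounded by $1$ — it is $2$ (each edge counted twice), and $\sum_{u,v}\bar\mu(u)\bar\mu(v) \le 4$. So modulo the overlap terms, $\rho_n(m) \le O(m)\cdot (m-2)^{-(m-2)} \cdot 4 = O(m^{-(m-3)})$, which is way bigger than the $m^{-m}\beta(P_m) \approx m^{-(2m-2)}$ target — so this crude bound is \emph{not} enough, and the real argument must instead compare $\rho_n(m)$ to $\beta(\mu; P_m)$ for the \emph{same optimal $\mu$}, exploiting that $\mu$ is a maximizer.

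So the actual strategy I would pursue: take $\mu \in \Opt(n;\cdot)$ optimal for $\rho_n(m)$, and use a variational/first-order identity in the spirit of Lemma~\ref{lem:inequalities regarding the masses} to relate the weighted degrees $\bar\mu(x)$ appearing in $\rho$ to sums of path-weights — this is exactly the trick that let the authors prove Lemma~\ref{thm:main theorem}, and $\rho$ is a polynomial in the $\mu(e)$'s of bounded degree, so its maximizer satisfies KKT-type stationarity conditions. The key step is: write $\rho_n(m) = \rho(\mu;m)$, expand one of the endpoint weighted-degrees, and recognize the inner sum as (a bounded multiple of) $\beta^*$-type quantities which Lemma~\ref{lem:number of P_m in an n vertex graph} controls by $1/(m-2)^{m-2}$; then use optimality of $\mu$ to convert the leftover $\bar\mu$-factors into a copy of $\beta(\mu;P_m)$ rather than bounding them by constants. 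The overlap terms (where the free endpoints collide with the path) I expect to dispose of by noting they produce walks which are \emph{shorter} paths with repeated vertices, and these can be absorbed: the weight of walks using a repeated vertex is dominated, after re-routing, by $\beta(\mu;P_j)$ for $j < m+2$, and one shows $\beta(\mu;P_j) \le \beta(P_j) = O(j^{-(j-2)})$, with the dangerous case being $j$ close to $m$.

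The main obstacle, and where I would spend the most care, is precisely the bookkeeping of the degenerate walks: bounding the contribution of walks of the form $(x_1,\ldots)$ where $x_1$ or $x_{m+2}$ repeats an interior vertex, and showing that over all $O(m^2)$ choices of which indices collide, the total is still only $O(m^{-2})\beta(P_m)$ — the factor $1152/m^2$ in the statement strongly suggests that the bound comes from summing $O(m^2)$ such degenerate configurations, each contributing $O(1)\cdot m^{-4}\cdot(\text{something}) \le O(m^{-2})\beta(P_m)$, together with the single "clean" term which gives the dominant $\beta(P_m)$ up to an absolute constant. Getting the exponents to line up — i.e., verifying that a walk with one repeated vertex really does cost a factor $\Theta(1/m^2)$ relative to $\beta(P_m)$, via Lemma~\ref{lem:number of P_m in an n vertex graph} applied to the two sub-paths flanking the repeat — is the crux, and I would do it by casework on whether the repeat is "near the end" or "in the middle" of the path.
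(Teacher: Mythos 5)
You correctly identify the broad strategy (take $\mu$ optimal for $\rho_n(m)$ and use first-order/KKT stationarity, as in Lemma~\ref{lem:inequalities regarding the masses}), but you have not found the actual mechanism, and the directions you sketch would not lead there. The crux of the paper's proof is a single consequence of KKT applied to the optimal $\mu$: because every monomial of the objective has degree at most $3$ in each variable $\mu(i,j)$, every walk in $\mathbf{P}^*$ contains at least $m-1$ distinct edges, and every vertex has degree at most $4$ in such a walk, one obtains the \emph{uniform} bound $\bar\mu(i) \le \tfrac{12}{m-1}$ for every $i\in[n]$. Once this is in hand the lemma follows in one line: pull $\bar\mu(x_1)\bar\mu(x_m)\le\bigl(\tfrac{12}{m-1}\bigr)^2$ out of the sum defining $\rho(\mu;m)$ and observe that what remains is exactly $2\beta(\mu;P_m)\le 2\beta(P_m)$ (the factor $2$ accounts for labeled versus unlabeled paths). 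The constant $\tfrac{1152}{m^2}$ is simply $2\cdot\bigl(\tfrac{12}{m-1}\bigr)^2\le\tfrac{1152}{m^2}$ for $m\ge 2$; it does \emph{not} arise from summing $O(m^2)$ degenerate configurations as you conjecture.

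Your proposal misallocates effort in two concrete ways. First, you plan to invoke Lemma~\ref{lem:number of P_m in an n vertex graph} inside this proof; the paper does not use it here at all --- that lemma is needed only to bound $\beta(P_m)$ itself (i.e.\ in the proof of Lemma~\ref{thm:main theorem}), while Lemma~\ref{prop:rho to beta} is agnostic to the numerical value of $\beta(P_m)$. Second, you devote most of your analysis to the ``degenerate/overlap'' walks where a free endpoint repeats an interior vertex, proposing casework and ``re-routing''. This is a red herring: once the two factors $\bar\mu(x_1),\bar\mu(x_m)$ are bounded by $O(1/m)$ and pulled out, all such walks are accounted for automatically and no case distinction is needed. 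The missing idea is precisely the uniform weighted-degree bound $\bar\mu(i)=O(1/m)$ derived from the KKT conditions, which your sketch of ``converting leftover $\bar\mu$-factors into a copy of $\beta(\mu;P_m)$'' neither states nor supplies.
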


    The constant $1152$ in the above lemma is clearly not optimal. We did not make any attempt to improve it, as it seems that a new idea is required to obtain the optimal one.
    A simple lower bound for $\rho_n(m)$ is $8/m^{m}$, which is achieved by the uniform distribution on the edges of $C_{m}$.
    As we mentioned in the previous section, it seems reasonable to conjecture that $\beta(P_{m})=1/m^{m-2}$.
    Therefore, a natural conjecture is that in Lemma \ref{prop:rho to beta} the optimal constant is $8$.

    Let us first deduce Theorem \ref{thm:number of path in planar graphs} from Lemmas \ref{thm:main theorem} and \ref{prop:rho to beta}.

    \begin{proof}[Proof of Theorem \ref{thm:number of path in planar graphs}]
        Lemma 2.3 in Cox and Martin \cite{CoxMar2021_1} asserts that for all $m\geq 2$ we have
        \[
            N_{\mathcal{P}}(n,P_{2m+1})\leq (\rho(m)/2+o(1)) n^{m+1}\;.
        \]
        Furthermore, since Lemma \ref{prop:rho to beta} holds for all $n$, we deduce that $\rho(m)\leq \frac{10^3}{m^2}\cdot\beta(P_m)$.
        Together with Lemma \ref{thm:main theorem}, this gives Theorem \ref{thm:number of path in planar graphs} as then
        \begin{align*}
            N_{\mathcal{P}}(n,P_{2m+1})&\leq (\rho(m)/2+o(1)) n^{m+1}\leq (576\beta(P_m)m^{-2}+o(1))  n^{m+1}\\
            &\leq  (10^4m^{-m}+o(1)) n^{m+1}\;.\qedhere
        \end{align*}
    \end{proof}

    Before proving Lemma \ref{prop:rho to beta}, let us recall a special case of the Karush--Kuhn--Tucker (KKT) conditions (see Corollaries 9.6 and 9.10 in \cite{Gul2010}).

    \begin{theorem}[Special case of the KKT conditions]\label{thm:KKT}
    Let $f\colon \mathbb{R}^n\to \mathbb{R}$ be a continuously differentiable function, and consider the optimization problem
    \begin{align*}
        \max_{x\in \Delta} f(x)\;, \text{ where } \Delta=\left\{x:\sum_{i=1}^{n} x_i=1\text{ and } x_1,\ldots ,x_n\geq 0\right\}\;.
    \end{align*}
    If $\mathbf{x}^*$ achieves this maximum, then there is some $\lambda\in \mathbb {R}$ such that, for each $i\in [n]$, either
    \[
        \mathbf{x}^*_i=0,\quad\text{or}\quad \frac{\partial f}{\partial x_i}(\mathbf{x}^*)=\lambda\;.
    \]
    \end{theorem}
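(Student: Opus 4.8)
The plan is to derive the statement directly from the one-dimensional first-order optimality condition (Fermat's theorem), bypassing the general KKT apparatus. Let $\mathbf{x}^*\in\Delta$ be a point at which $f$ attains its maximum over $\Delta$, and let $S=\{i\in[n]:\mathbf{x}^*_i>0\}$ be its support. Since the coordinates of $\mathbf{x}^*$ are non-negative and sum to $1$, the set $S$ is non-empty. For every index $i\notin S$ the first alternative $\mathbf{x}^*_i=0$ in the desired conclusion already holds, so it is enough to exhibit a single $\lambda\in\mathbb{R}$ with $\frac{\partial f}{\partial x_i}(\mathbf{x}^*)=\lambda$ for all $i\in S$.

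The heart of the argument is the claim that $\frac{\partial f}{\partial x_i}(\mathbf{x}^*)=\frac{\partial f}{\partial x_j}(\mathbf{x}^*)$ for every pair $i,j\in S$. To prove it, set $\delta=\min(\mathbf{x}^*_i,\mathbf{x}^*_j)>0$ and, for $t\in(-\delta,\delta)$, consider the perturbed point $\mathbf{x}(t)=\mathbf{x}^*+t(e_i-e_j)$, where $e_i$ and $e_j$ denote standard basis vectors. Adding $t$ to coordinate $i$ and subtracting $t$ from coordinate $j$ preserves the constraint $\sum_k \mathbf{x}(t)_k=1$, and by the choice of $\delta$ every coordinate of $\mathbf{x}(t)$ remains non-negative; hence $\mathbf{x}(t)\in\Delta$ for all $t\in(-\delta,\delta)$. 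The function $g(t)=f(\mathbf{x}(t))$ is therefore differentiable on this open interval and attains its maximum over the interval at the interior point $t=0$, since $\mathbf{x}(0)=\mathbf{x}^*$ is a global maximizer over $\Delta$. By Fermat's theorem $g'(0)=0$, and the chain rule gives
\[
    0=g'(0)=\nabla f(\mathbf{x}^*)\cdot(e_i-e_j)=\frac{\partial f}{\partial x_i}(\mathbf{x}^*)-\frac{\partial f}{\partial x_j}(\mathbf{x}^*)\;,
\]
establishing the claim.

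To conclude, fix any $i_0\in S$ and put $\lambda=\frac{\partial f}{\partial x_{i_0}}(\mathbf{x}^*)$. The claim yields $\frac{\partial f}{\partial x_i}(\mathbf{x}^*)=\lambda$ for all $i\in S$, while $\mathbf{x}^*_i=0$ for all $i\notin S$, so the stated dichotomy holds for every $i\in[n]$. I do not anticipate a real obstacle; the only point demanding care is the verification that the perturbation $\mathbf{x}(t)$ stays feasible for small $t$ of \emph{both} signs, which is precisely why the comparison is carried out only between indices in the support $S$ --- for an index outside the support one can move only in the direction that increases it, and no gradient information is needed there anyway. An essentially equivalent alternative would be to observe that $\mathbf{x}^*$ lies in the relative interior of the face of $\Delta$ spanned by $S$, apply the classical Lagrange multiplier theorem on the affine hull of that face (a single equality constraint $\sum_{i\in S}x_i=1$), and read off the same conclusion; the direct argument above is just an explicit unpacking of this.
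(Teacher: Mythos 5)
Your proof is correct. Note that the paper does not prove this statement at all: it is quoted as a special case of the general Karush--Kuhn--Tucker conditions with a citation to Corollaries 9.6 and 9.10 of G\"uler's \textbf{Foundations of optimization}, so there is no in-paper argument to compare against. What you supply is the standard self-contained derivation for the simplex: for two indices $i,j$ in the support of $\mathbf{x}^*$, the two-sided perturbation $\mathbf{x}^*+t(e_i-e_j)$ stays in $\Delta$ for small $|t|$, so Fermat's theorem forces the partial derivatives to agree, and $\lambda$ is taken to be this common value. This is exactly the right elementary route for the special case actually needed (a single affine constraint plus nonnegativity), and it correctly sidesteps the only subtle point, namely that for an index outside the support one can only perturb in one direction, which is why no gradient condition is claimed there. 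The one thing your argument does not recover is the usual complementary-slackness inequality $\frac{\partial f}{\partial x_i}(\mathbf{x}^*)\le\lambda$ for $i$ with $\mathbf{x}^*_i=0$ (obtainable from a one-sided perturbation), but the theorem as stated does not assert it and the paper never uses it, so nothing is missing.
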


    \begin{proof}[Proof of Lemma \ref{prop:rho to beta}.]
        Let $\mathbf{P^*}$ be the set of walks $(x_1,x_2,\ldots,x_{m+2})$ on $[n]$ constructed as follows: first, choose $(x_2,x_3,\ldots,x_{m+1})$ to be a path (i.e.\ a non-induced and \emph{labeled} copy of $P_{m}$), and then complete the walk by choosing an arbitrary $x_1\neq x_2$ and an arbitrary $x_{m+2}\neq x_{m+1}$.
        Further, for any $i\neq j\in [n]$ we let $\mathbf{P^*}(\{i,j\})$ be the set of all walks $(x_1,x_2,\ldots,x_{m+2})\in \mathbf{P^*}$ such that there is $k$ with $\{x_{k},x_{k+1}\}=\{i,j\}$.

        Define $f\colon \mathbb{R}^{\binom{[n]}{2}}\to \mathbb{R}$ by
        \[
            f(\mathbf{x})=\sum_{p\in (n)_m}\left(\sum_{p_0\in [n]\setminus\{p_1\}}\mathbf{x}_{p_0,p_1}\right)\left(\prod_{i=1}^{m-1}\mathbf{x}_{p_i,p_{i+1}}\right)\left(\sum_{p_{m+1}\in [n]\setminus \{p_m\}}\mathbf{x}_{p_m,p_{m+1}}\right)\;.
        \]
        Suppose $\mu$ is a probability measure on the edges of $K_n$ with $\rho(\mu;m)=\rho_n(m)$. When viewing $\mu$ as a vector in $\mathbb{R}^{\binom{[n]}{2}}$, we have $f(\mu)=\rho(\mu;m)$, and moreover,
        \[
            f(\mu)=\max_{\mathbf{x}\in \Delta} f(\mathbf{x})\;, \text{ where } \Delta=\left\{\mathbf{x}:\sum_{i=1}^{\binom{n}{2}} \mathbf{x}_i=1\text{ and } \mathbf{x}_1,\ldots ,\mathbf{x}_{\binom{n}{2}}\geq 0\right\}\;.
        \]
    By the maximality of $\mu$ and by Theorem \ref{thm:KKT} (the KKT conditions), there is a non-negative\footnote{As the polynomial has only positive coefficients, $\lambda$ must be non-negative.} real $\lambda$ such that for all $\{i,j\}\in \binom{[n]}{2}$ we have
    \[
        \mu(i,j)=0 \quad \text{or}\quad \frac{\partial f(\mathbf{x})}{\partial \mathbf{x}_{i,j}}(\mu)=\lambda\;.
    \]
    Note that the degree of each term $\mathbf{x}_{i,j}$, in every monomial of $f(\mathbf{x})$ is at most\footnote{The only case where it is $3$ is when $m=2$ and we consider a walk on one edge three times, e.g,\ the walk $(1,2,1,2)$.} $3$. Thus, for every $\{i,j\}\in \binom{[n]}{2}$ we have
    \begin{align}\label{eq1-rho}
       \lambda \cdot \mu(i,j)=\frac{\partial f(\mathbf{x})}{\partial \mathbf{x}_{i,j}}(\mu)\cdot\mu(i,j)\leq 3\sum_{P\in \mathbf{P^*}(\{i,j\})}\mu (P)\;.
    \end{align}
    We also have the following:
    \begin{align}
        \lambda &= \sum_{\{i,j\}\in \binom{[n]}{2}}\lambda \cdot \mu(i,j)\\
        &=\sum_{\{i,j\}\in \binom{[n]}{2}}\frac{\partial f(\mathbf{x})}{\partial \mathbf{x}_{i,j}}(\mu)\cdot \mu(i,j)\nonumber\\
        &\geq \sum_{\{i,j\}\in \binom{[n]}{2}}\sum_{P\in \mathbf{P^*}(\{i,j\})}\mu(P)\nonumber\\
        &=\sum_{P\in \mathbf{P^*}}\mu(P)\sum_{\{i,j\}\in \binom{[n]}{2}}\mathbbm{1} (\{i,j\}\in E(P))\\
        &\geq (m-1)\rho(\mu;m)\label{eq2-rho}\;,
    \end{align}
    where the first equality holds as $\mu$ is a probability measure, the second equality holds by the definition of $\lambda$, and the last inequality holds as there are at least $m-1$ distinct edges in each walk in $\mathbf{P^*}$. Combining \eqref{eq1-rho} and \eqref{eq2-rho} we have the following for all $i\in [n]$:
    \begin{align*}
        (m-1)\cdot \bar{\mu}(i)\cdot \rho(\mu;m) &= \sum_{j\in [n]\setminus\{i\}}(m-1)\mu(i,j)\rho(\mu;m)\\
        &\leq 3 \sum_{j\in [n]\setminus\{i\}} \sum_{P\in \mathbf{P^*}(\{i,j\})}\mu(P)\\
        &=3\sum_{P\in \mathbf{P^*}}\mu (P)\sum_{j\in [n]\setminus\{i\}} \mathbbm{1}(\{i,j\}\in E(P))\\
        &=3\sum_{P\in \mathbf{P^*}}\deg_P(i)\mu (P)\\
        &\leq 12\sum_{P\in \mathbf{P^*}}\mu (P)= 12\cdot \rho(\mu;m)\;.
    \end{align*}
    where the last inequality follows as for every $P\in \mathbf{P^*}$ and $i\in P$ we have\footnote{An example being the walk $(1,2,1,3,1)$.} $\deg_P(i)\leq 4$.
    Dividing both sides by $(m-1)\cdot \rho(\mu;m)$ we obtain that for all $i$ we have $\bar{\mu}(i)\leq \frac{12}{m-1}$. Therefore, as $m\geq 2$ we have
    \begin{align*}
        \rho (\mu;m)&=\sum_{x\in (n)_m}\bar{\mu}(x_1)\left(\prod_{i=1}^{m-1}\mu (x_i,x_{i+1})\right)\bar{\mu}(x_m)\\
        &\leq \frac{144}{(m-1)^2}\sum_{x\in (n)_m}\prod_{i=1}^{m-1}\mu (x_i,x_{i+1})\leq \frac{1152}{m^2}\cdot \beta(P_{m})\;.\qedhere
    \end{align*}

    \end{proof}

    \end{document}